\newtheorem{theorem}{Theorem}
\newtheorem{example}[theorem]{\it Example}
\newtheorem{proposition}[theorem]{Proposition}
\newtheorem{definition}[theorem]{Definition}
\newtheorem{remark}[theorem]{\it Remark}
\newcommand{\be}{\begin{enumerate}}
\newcommand{\ee}{\end{enumerate}}
\font\tenBb=msbm10 \font\sevenBb=msbm7 \font\fiveBb=msbm5
\newcommand{\Q}{\mathcal{Q}}
\def\Bb{\fam\Bbfam\tenBb}
\def\Q{{\Bb Q}}
\def\I{{\Bb I}}
\def\A{{\mathcal A}}
\def\Q{{\mathcal Q}}
\font\dsrom=dsrom10 scaled 1200 
\def \ind{\textrm{\dsrom{1}}}
\begin{document}
\title{New copulas and their applications to Symmetrizations  of bivariate copulas}
\maketitle

 \author{Mohamed El maazouz}
  \address{Universit\'e Ibn Zohr, Facult\'e des Sciences, D\'epartement de Math\'ematiques, Agadir, Maroc}
 \email{maaz71@gmail.com }

\author{Ahmed Sani }
\address{Universit\'e Ibn Zohr, Facult\'e des Sciences, D\'epartement de Math\'ematiques, Agadir, Maroc}
\email{ahmedsani82@gmail.com }

\thanks{}

\footnote{\today}

% General info
\subjclass[\textbf{Subject classification 2010}]{Primary   62 Hxx;62 Exx Secondary 46 Axx  }

\keywords \textbf{Keywords:}{asymmetric copulas, perturbation}

\vskip 2 cm

\begin{center}
\textsf{Abstract}
\end{center}
New copulas, based on perturbation theory, are introduced to clarify a \emph{symmetrization} procedure for  asymmetric copulas. We  give also some properties of the \emph{symmetrized} copula. Finally, we examine families of copulas with a prescribed  symmetrized one. By the way, we study topologically, the set of all symmetric copulas and give some of its classical and new properties.

\section*{Introduction}

The notion of non ex-changeability was first  introduced by Hollander \cite{Ho71} to quantify the asymmetry of factors which describe a given phenomena. It was baptized \emph{asymmetry} by Nelsen in \cite{Ne93},\cite{Ne06} and \cite{NE07},  Mesiar and  Klement adopted the same nomenclature in \cite{KM06}, so  all other researchers in this emergent field of copulas pursue how to detect, to measure and sometimes to avoid the asymmetry. In a general context, the ex-changeability relies on the asymmetry procedures with non commutative effect in different directions. In chemistry, for example, the dosage acid/base and  reciprocally base/acid do not lead to same $pH-$ aqueous solution. In economical point of view, the respective combination of $C(k,l)$ of $k$  and $l$  units of capital and labor do not ensure the same \emph{utility} as the $C(l,k)$ ensures. Other examples abound in many fields to show the importance of asymmetry. A notion closely related to the topic is the radial symmetry as treated first in \cite{Ne06} and developed later in \cite{Deh13}.
% Here we are concerned with the measures of non ex-changeability and the problem of extracting the most possible part of symmetry for a given joint distribution.
%  We thus pursue in the same spirit of \cite{Sa20} and some references therein such as \cite{Si16} and \cite{De10} where have been defined several  new asymmetry measure based on Calderon-Zygmund lemma (see mainly \cite{Sa20}).
  
   %Extraction of most symmetry property will be baptized from now on .
For statistical applications, multiple regression theory leads in a general framework to estimation of parameters as critical points of first order condition. For the simple case $Y=X\beta+\epsilon $ when some classical hypotheses are satisfied, an estimator of $\beta$ is given by $\hat{\beta}=^{t}XX^{t}XY$. For a symmetric matrix $X$, the calculus become  easier since, in adapted basis, $^{t}XX=X^2$.\\
    The  concept of \emph{symmetrization } consists in, as it will be developed below, a classical and simple decomposition of a copula that governs the joint distribution of random variables. Copulas gained a great interest in recent decades because of their relatively simple use in statistics and probability. Their importance comes back to the historical Sklar's theorem (see \cite{Sk59},\cite{Sk73} or the unavoidable Nelsen's book \cite{Ne06}). The Sklar result presents the copula as a natural bridge between margin distributions of  random vector and its joint law. A recent and topological new proof of this important result was given by Durante et al.\cite{Durante13sklar}.
%If in addition, the margins $F$ and $G$ are assumed to be continuous then the copula given by Sklar's theorem is unique.\\
In the first section of the current paper, many results on copulas that we will need are given as preliminaries. We also definitely make precise notations and recall some recent developments on asymmetry. We recall some results on classical dependence parameters expressed in terms of copulas, see\cite{Bu20}.\\

 The second section is devoted to define the \emph{symmetrized} and\emph{ the radial symmetrized} copulas. We give some properties linking a given copula and its symmetrized and radial symmetrized ones in the same spirit of \cite{SMP20}.\\

 In the last section, we study  the inverse problem by giving new copulas as solutions of 
 some functional equations. The new copulas are based on an asymmetric perturbation of the independence copula 
 $\Pi$. More precise, for a given symmetric or radial symmetric copula $S$, we determine copulas for which  $S$ is the prescribed \emph{symmetrized} or the \emph{radial symmetrized} part.

% After discuss some  topological results on the set of all symmetric and radial symmetric copulas and make precise dependent parameters expressed in terms of copula
%  We take profit from the fact that  lower Fr\'echet-Hoeffding bound in higher dimensions (more than 2) does not belong to the set of all multivariate copulas to examine its  projection.\\

Along this paper, $\I$ denotes the closed interval $[0,1]$ and $\|.\|$ the usual uniform  norm  on the set $\mathcal{C}$ of all bivariate copulas. Let $\mathcal{S}$ and $\mathcal{R}$ denote respectively its subsets of  all symmetric and  radial symmetric copulas. 
 
\section{Preliminaries}

  \begin{definition}\label{def1}
  	A copula $C$ is a bifunction on $\I^2$ into $\I$ which satisfies the following conditions for all $u,v,u_1,v_1,u_2,v_2$ in $\I$
  	\begin{enumerate}
  		\item Border conditions: $C(0,v)=C(u,0)=0.$
  		\item Uniform margins: $C(1,v)=v$ and $C(u,1)=u.$
  		\item the $C-$volume property: $V_C(R)= C(u_2,v_2)-C(u_2,v_1)-C(u_1,v_2)+C(u_1,v_1)\geq 0$ for all rectangle $R=[u_1,v_1]\times[u_2,v_2] \subset \I^2$ with $u_1<u_2$ and $u_2<v_2$.
  	\end{enumerate}
 \end{definition}
 Any element of $\mathcal C$ is  framed between Fr\'echet-Hoeftding bounds   $W$ and $M$ given by $W(u,v)=\max(u+v-1,0)$ and $M(u,v)=\min(u,v)$. Precisely, we have
 $$
 \forall (x,y)\in \I^2:\quad W(x,y)\le  C(x,y)\le M(x,y). 
 $$
  These bounds ($M$ and $W$)  are also copulas but in higher dimensions, say for multivariate copulas, $W$ is not a copula. \\
   Statistically speaking,  Fr\'echet-Hoeftding bounds  $M$ and $W$ model respectively the co-monotonicity and anti-monotonicity of  empirical variables $X$ and  $Y$. The copula $\Pi:\ (x,y)\in \I^2\mapsto xy$ characterizes the total independence between the two variables.\\
  	
On the other hand, some derived copulas from a given one will serve as efficient tool to treat the asymmetry questions  mainly those of   \emph{transpose copula}, \emph{survival } one.
 
%\begin{theorem}[Sklar's theorem]
%	Let $H$ be two-dimensional distribution function on a probability space $(\Omega,p)$  with marginal distribution functions $F$ and $G.$ Then there exists
% copula $C$ such that
%	\begin{equation}\label{eq1}
%\forall (x,y)\in \I^2:\quad	H(x,y) = C(F(x),G(y)).
%	\end{equation}
%If $F$ and $G$ are continuous then the copula $C$ is unique.
%\end{theorem}  
  
 \begin{definition}

%  	$(X,Y)$ be a random vector  with copula $C$, joint distribution function $H$ and with marginal distribution
%  	functions $F$, $G$, respectively. The marginal survival
%  	functions $\bar{F}$, $\bar{G}$ and joint survival function $\bar{H}$ of the vector $(X,Y)$ are given by $\bar{F}(x)= P (X >x)$,  $\bar{G}(y) =P (Y >y)$ and $\bar{H}(x, y) = P (X > x,Y >y)$ respectively. 
  	The survival copula of a given copula $C$
  	 is the function $\widehat{C}$ defined by the formula $$\forall u,v\in \I:\quad \widehat{C}(u,v)=u+v-1+C(1-u,1-v)$$
  \end{definition}

%\subsection{Usual concordance measures}(Nelsen p 159)
%\\
%A pair of random vectors $(X_1, X_2)$ and $(Y_1, Y_2)$ is concordant if larger values of the first one are associated
%with larger values of the second one, while smaller values of the first one are associated with smaller values
%of the second. The opposite notion is the notion of discordance. A pair of random vectors is discordant if
%larger values for the first one are associated with smaller values of the second one, while smaller values of the
%first one are associated with larger values of the second
%With this in mind, 
we denote by $\Q$  (\cite[pages,159-182]{Ne06}) the difference of two probabilities $\Q = P((X_1-X_2)(Y_1-Y_2) > 0) - P((X_1 - X_2)(Y_1 - Y_2) < 0)$
for a pair of continuous random vectors $(X_1, X_2)$ and $(Y_1, Y_2)$. If the corresponding copulas are $C_1$ and $C_2$, then we have $$\Q(C_1,C_2)=4\int_{\I^2}C_1(u,v)~~dC_2(u,v)-1$$

The usual four measures of concordance may be defined in terms of the concordance function $\Q$.
\\Kendall’s tau of $C$ is defined by
$$\tau(C) = \Q(C, C)$$ 
Spearman’s rho by
$$\rho(C)=3\Q(C, \Pi)$$
Gini’s gamma by
$$\gamma(C) = \Q(C, M) + \Q(C, W).$$ 
Blomqvist's Beta by 
$$\beta(C)=4C\left(\frac{1}{2},\frac{1}{2}\right)-1.$$
%\subsection{Tail dependence}.\\
%
%
%\noindent The following results are extracted from \cite[page, 214]{Ne06}
%\begin{definition}
%Let $X$ and $Y$ be continuous random variables with distribution functions $F$ and $G$, respectively. 
%\\The upper tail dependence
%parameter $\lambda_U$ is the limit (if it exists)
% \begin{equation}\label{UTD}
%\lambda_U=\underset{t\rightarrow 1^-}{\lim}P\left(Y>G^{(-1)}(t)/X>F^{(-1)}(t)\right)
%\end{equation}
%
%The lower tail dependence
%parameter $\lambda_L$ is the limit (if it exists)
% \begin{equation}\label{LTD}
%\lambda_L=\underset{t\rightarrow 0^+}{\lim}P\left(Y\leq  G^{(-1)}(t)/X\leq F^{(-1)}(t)\right)
%\end{equation}
%\end{definition}

\begin{theorem}
Let $X$ and $Y$ be continuous random variables with distribution functions $F$ and $G$, respectively, 
and let $C$ be the copula of $X$ and $Y$. If theses
limits  exist, then the upper and lower tail dependence are given as:

$$\lambda_U=2-\underset{t\rightarrow 1^-}{\lim}\frac{1-C(t,t)}{1-t}~~\text{
and}~~ \lambda_L=\underset{t\rightarrow 0^+}{\lim}\frac{C(t,t)}{t}.$$
\end{theorem}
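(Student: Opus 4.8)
The plan is to start from the probabilistic definitions of the two tail dependence coefficients and reduce each of them, via Sklar's theorem, to a limit involving only the diagonal section $t\mapsto C(t,t)$ of the copula. Recall that the lower and upper tail dependence parameters are the limiting conditional probabilities
$$\lambda_L=\lim_{t\to 0^+}P\bigl(Y\le G^{-1}(t)\mid X\le F^{-1}(t)\bigr),\qquad \lambda_U=\lim_{t\to 1^-}P\bigl(Y> G^{-1}(t)\mid X> F^{-1}(t)\bigr),$$
which we assume to exist as in the hypothesis.

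For the lower tail I would first write the conditional probability as the quotient $P(X\le F^{-1}(t),\,Y\le G^{-1}(t))/P(X\le F^{-1}(t))$. Since $F$ and $G$ are continuous, their (generalized) inverses satisfy $F(F^{-1}(t))=t$ and $G(G^{-1}(t))=t$, so the denominator equals $t$. By Sklar's theorem the joint distribution factors as $P(X\le x,\,Y\le y)=C(F(x),G(y))$, so the numerator equals $C(t,t)$. Passing to the limit gives $\lambda_L=\lim_{t\to 0^+}C(t,t)/t$ directly.

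For the upper tail the only extra ingredient is inclusion--exclusion. Writing
$$P(X> F^{-1}(t),\,Y> G^{-1}(t))=1-P(X\le F^{-1}(t))-P(Y\le G^{-1}(t))+P(X\le F^{-1}(t),\,Y\le G^{-1}(t)),$$
the same two facts (continuity of the margins together with Sklar's theorem) collapse the right-hand side to $1-2t+C(t,t)$, while the conditioning event has probability $P(X> F^{-1}(t))=1-t$. Hence the conditional probability is $(1-2t+C(t,t))/(1-t)$, and the rearrangement $1-2t+C(t,t)=2(1-t)-(1-C(t,t))$ turns it into $2-\dfrac{1-C(t,t)}{1-t}$, which yields the stated formula for $\lambda_U$ after taking the limit $t\to 1^-$.

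There is no genuine analytic obstacle here; once Sklar's theorem is invoked the argument is essentially bookkeeping. The single point deserving care is the continuity hypothesis: it is exactly what guarantees $F(F^{-1}(t))=t$ (so that the marginal probabilities reduce cleanly to $t$ and $1-t$) and it simultaneously ensures that the copula $C$ is uniquely determined, so that the diagonal value $C(t,t)$ is well defined. If one wished to be fully rigorous about the generalized inverse when $F$ or $G$ is merely continuous but not strictly increasing, one would note that continuity already forces $F\circ F^{-1}$ to be the identity on the relevant range, which is all the computation uses.
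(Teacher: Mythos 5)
Your proof is correct: the reduction of both conditional probabilities to the diagonal section $C(t,t)$ via continuity of the margins and Sklar's theorem, plus inclusion--exclusion for the upper tail, is exactly the standard derivation of these formulas. Note that the paper itself states this theorem without any proof (it is quoted as a known result from Nelsen's book), so there is nothing to compare against; your argument supplies the missing justification and is complete as written.
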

%\subsection{Positive quadrant dependence}
%\begin{definition}
%A copula $C$ is said positive quadrant dependence PQD if $C\geq \Pi$. 
%\end{definition}

\section{Asymmetry and radial asymmetry of bivariate copulas}
 
\begin{definition}
A bivariate copula $C$ is said:
\begin{itemize}
\item[•]Symmetric  if it satisfies $$\forall u,v\in \I:\quad C(u,v)=C(v,u).$$
\item[•]Radial symmetric  if it satisfies $$\widehat{C}=C.$$

\end{itemize}
 
We denote $C^T$ the transpose of $C$ defined by $C^T(u,v)=C(v,u)$.
% $\mathcal{S}$ the set of all symmetric bivarite copulas and $\mathcal{R}$ the set of all radial symmetric bivarite copulas.
\end{definition}
\begin{example}
\begin{enumerate}
\item The bounds $M$ and $W$ of Fr\'echet-Hoeffding and the independence copula $\Pi$ are symmetric and also radial symmetric.
\item   Let $\alpha$ and $\beta$ be in $(0,1)$. The Marshall Olkin family of copulas.  $$C_{\alpha,\beta}(u,v)=min(u^{1-\alpha}v,uv^{1-\beta})$$ is asymmetric if $\alpha \neq \beta$.
%\\The scatter plots of Marshal-Olkin copula for $\alpha=%\frac{1}{2}$ and $\beta=\frac{3}{4}$ is given as follows: 
% \begin{center}
%\begin{figure}[!h]
%\centering 
%\includegraphics[height=6cm,width=6cm]{Marshal-olkin.png}
%\caption{500 data scatterplots  of $C_{\frac{1}{2},\frac{3}{4}}$ }
%\end{figure}
%\end{center}

\end{enumerate} 
\end{example}
\begin{proposition}
The subsets  $\mathcal{S}$ and $\mathcal{R}$ are both convex and  closed  in $\mathcal{C}$ with respect to the uniform convergence. 
\end{proposition}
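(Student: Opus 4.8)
The plan is to view both $\mathcal{S}$ and $\mathcal{R}$ as the subsets of $\mathcal{C}$ carved out by a single functional constraint that is \emph{linear} (for $\mathcal{S}$) or \emph{affine} (for $\mathcal{R}$) in the copula, and then to deduce convexity and closedness from the corresponding properties of $\mathcal{C}$ itself. So the first step is to record that $\mathcal{C}$ is convex and closed under uniform convergence. Convexity is immediate from Definition~\ref{def1}: the border and uniform-margin conditions are linear equalities, and the $C$-volume inequality $V_C(R)\ge 0$ satisfies $V_{\lambda C_1+(1-\lambda)C_2}(R)=\lambda V_{C_1}(R)+(1-\lambda)V_{C_2}(R)\ge 0$, so for $C_1,C_2\in\mathcal{C}$ and $\lambda\in[0,1]$ the map $\lambda C_1+(1-\lambda)C_2$ again satisfies all three conditions. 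Closedness is equally direct: if $C_n\to C$ uniformly with $C_n\in\mathcal{C}$, each defining (in)equality passes to the pointwise limit, so $C\in\mathcal{C}$.

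For $\mathcal{S}$, I would use that symmetry is the linear constraint $C=C^{T}$. Given $C_1,C_2\in\mathcal{S}$ and $\lambda\in[0,1]$, the combination $C=\lambda C_1+(1-\lambda)C_2$ lies in $\mathcal{C}$ by the previous step, and for all $u,v\in\I$ one has $C(u,v)=\lambda C_1(u,v)+(1-\lambda)C_2(u,v)=\lambda C_1(v,u)+(1-\lambda)C_2(v,u)=C(v,u)$, whence $C\in\mathcal{S}$. For closedness, if $C_n\to C$ uniformly with each $C_n$ symmetric, then passing to the limit in the identity $C_n(u,v)=C_n(v,u)$ gives $C(u,v)=C(v,u)$, and $C\in\mathcal{C}$ by the first step, so $C\in\mathcal{S}$.

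For $\mathcal{R}$, the key observation is that the survival transform $C\mapsto\widehat{C}$, with $\widehat{C}(u,v)=u+v-1+C(1-u,1-v)$, is an \emph{affine isometry} of $\mathcal{C}$. Indeed $\widehat{\lambda C_1+(1-\lambda)C_2}=\lambda\widehat{C_1}+(1-\lambda)\widehat{C_2}$, because the constant term splits as $u+v-1=\lambda(u+v-1)+(1-\lambda)(u+v-1)$, and $\|\widehat{C}-\widehat{D}\|=\|C-D\|$ because $(u,v)\mapsto(1-u,1-v)$ is a bijection of $\I^2$. Radial symmetry is the fixed-point condition $\widehat{C}=C$, i.e. the zero set of the linear map $C\mapsto\widehat{C}-C$; this set is therefore convex, and its closedness follows since both $C\mapsto\widehat{C}$ and the identity are uniformly continuous, so $\widehat{C_n}-C_n\to\widehat{C}-C$ whenever $C_n\to C$. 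Combined with closedness of $\mathcal{C}$, this yields that $\mathcal{R}$ is closed.

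I do not expect a genuine obstacle here: every constraint involved is linear or affine, so convexity and closedness are essentially formal once membership in $\mathcal{C}$ is handled. The only point requiring a little care is the prerequisite that the uniform (indeed even the pointwise) limit of copulas is again a copula, in particular that the volume inequality survives the limit; this is standard but should be stated explicitly rather than taken for granted.
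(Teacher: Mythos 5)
Your proof is correct and follows essentially the same route as the paper: pass to the pointwise limit in the identities $C=C^{T}$ and $C=\widehat{C}$ for closedness, and use linearity/affinity of these constraints for convexity. You are in fact somewhat more careful than the paper, which tacitly assumes the uniform limit is again a copula and does not spell out that the survival transform respects convex combinations.
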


\begin{proof}
Let $(C_n)$ be a sequence of symmetric copulas which converges uniformly to a copula $C$. We show that $C$ is also symmetric. For all u,v in $\I$, we have:
$$ C(v,u)=\underset{n\rightarrow +\infty}{\lim}C_n(v,u)=\underset{n\rightarrow +\infty}{\lim}C_n(u,v)=C(u,v)$$ 
Let now $(C_n)$ be a sequence in $\mathcal{R}$ which converges uniformly to a copula $C$.
$$C(u,v)=\underset{n\rightarrow +\infty}{\lim}C_n(u,v)=\underset{n\rightarrow +\infty}{\lim}\widehat{C_n}(u,v)=\widehat{C}(u,v)$$ 
If $C_1,C_2\in \mathcal{S}$ and $\lambda$ is a real number in $\I$, then easily $\lambda C_1+(1-\lambda)C_2$ belongs to $\mathcal{S}$
\\If $C_1,C_2\in \mathcal{R}$ and $\lambda$ is a real number  in $\I$, then  $\lambda C_1+(1-\lambda)C_2 \in \mathcal{R}$.
\end{proof}
\begin{proposition}\label{Projection}
\begin{itemize}
\item[•]The projection of any bivariate copula $C$ respect to $\mathcal{S}$ is the symmetric copula $C_S=\frac{C+C^T}{2}$.
\item[•]The projection of any bivariate copula $C$ respect to $\mathcal{R}$ is the radial symmetric copula $C_R=\frac{C+\widehat{C}}{2}$.
\end{itemize}

\end{proposition}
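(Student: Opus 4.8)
The plan is to read the \emph{projection} of $C$ onto $\mathcal{S}$ (resp. $\mathcal{R}$) as the nearest point of that closed convex set for the uniform norm $\|\cdot\|$, and to prove that $C_S=\tfrac{C+C^T}{2}$ (resp. $C_R=\tfrac{C+\widehat{C}}{2}$) realizes this minimal distance. Throughout I would exploit that both sets were shown convex and closed in the previous proposition, so that talking about a metric projection makes sense.

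First I would check that the two candidates actually belong to the target sets. The transpose $C^T$ of a copula is again a copula (the border, margin and $2$-increasing conditions are all preserved under the exchange $(u,v)\mapsto(v,u)$), so the convex combination $C_S$ is a copula, and $(C_S)^T=C_S$ is immediate; hence $C_S\in\mathcal{S}$. For $C_R$, the survival copula $\widehat{C}$ is a copula, so $C_R$ is a copula as well. To see that it is radial symmetric I would use that $C\mapsto\widehat{C}$ is an \emph{affine involution}: a direct substitution gives $\widehat{\widehat{C}}=C$, and since the map sends $\lambda A+(1-\lambda)B$ to $\lambda\widehat{A}+(1-\lambda)\widehat{B}$, one obtains $\widehat{C_R}=\tfrac{\widehat{C}+\widehat{\widehat{C}}}{2}=\tfrac{\widehat{C}+C}{2}=C_R$, so $C_R\in\mathcal{R}$.

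The crucial observation is that both operations are \emph{isometries} of $(\mathcal{C},\|\cdot\|)$: the substitution $(u,v)\mapsto(v,u)$ in the supremum gives $\|A^T-B^T\|=\|A-B\|$, while for the survival map the affine term $u+v-1$ cancels in the difference and the substitution $(u,v)\mapsto(1-u,1-v)$ gives $\|\widehat{A}-\widehat{B}\|=\|A-B\|$. Granting this, I would first record the distances to the candidates, namely $\|C-C_S\|=\tfrac12\|C-C^T\|$ and $\|C-C_R\|=\tfrac12\|C-\widehat{C}\|$, which follow at once from $C-C_S=\tfrac{C-C^T}{2}$ and $C-C_R=\tfrac{C-\widehat{C}}{2}$. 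Then, for any competitor $S\in\mathcal{S}$, since $S=S^T$ the isometry yields $\|C^T-S\|=\|C^T-S^T\|=\|C-S\|$, and combining with the triangle inequality $\|C-C^T\|\le\|C-S\|+\|S-C^T\|$ gives $2\|C-S\|\ge\|C-C^T\|=2\|C-C_S\|$, that is $\|C-S\|\ge\|C-C_S\|$. The verbatim argument, using the survival isometry together with $\widehat{R}=R$ for $R\in\mathcal{R}$, delivers $\|C-R\|\ge\|C-C_R\|$, which completes both claims.

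The step I expect to be the only delicate one is the verification that $C_R$ is genuinely radial symmetric, since it rests squarely on the involution identity $\widehat{\widehat{C}}=C$ and the affineness of the survival transform; once the isometry observation is in place, the optimality reduces to a single triangle-inequality estimate. I would also add the caveat that, because $\|\cdot\|$ is not strictly convex, a nearest point need not be unique a priori, so the phrase \emph{the projection} is to be understood as \emph{a nearest point}, which is exactly what the above establishes.
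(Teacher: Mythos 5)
Your proof is correct and follows essentially the same route as the paper: the transpose (resp.\ survival) map is an isometry for $\|\cdot\|$ fixing $\mathcal{S}$ (resp.\ $\mathcal{R}$) pointwise, so $\|C-S\|=\|C^T-S\|$ and the triangle inequality gives $\|C-C_S\|=\tfrac12\|C-C^T\|\le\|C-S\|$ for every competitor $S$. Your additional checks that $C_S$ and $C_R$ actually lie in the target sets (via $\widehat{\widehat{C}}=C$ and affineness of the survival transform) and your caveat that the uniform norm is not strictly convex, so ``the projection'' should be read as ``a nearest point'', are welcome refinements the paper leaves implicit.
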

%%Let us comment this result. The copula $C_S$ given by the proposition (\ref{Projection}) is called the \emph{symmetrized copula } of $C$. In fact, it is the symmetric part of $C$. In geometrical  point of view, it is natural to think that a suitable weighting of the symmetric and asymmetric parts will lead to better symmetrization in the sense that $((1-\frac{1}{n})C_s+\frac{1}{n}(1-C_s))$ gives more symmetry part as $n$ is large enough. The problem is unfortunately the set $\mathcal C$ is not stable under homothety transformation. But in practice and statistic problems, one may choose a weight $n$ that fits data observations.  The following proposition explains that it is not possible to extract more symmetry from the remainder part of $C$.
\begin{proof}
We show that  $\frac{C+C^T}{2}$ is the closest symmetric copula  to $C$.
\\Let $S$ be any symmetric copula. Since $$\forall (u,v)\in \I^2,~~|C(u,v)-S(u,v)|=|C^T(v,u)-S^T(v,u)|=|C^T(v,u)-S(v,u)|.$$,  we have $\|C-S\|=\|S-C^T\|.$
And this equality gives 
$$\|C-C^T\|\leq \|C-S\|+\|S-C^T\|\leq 2\|C-S\|$$
so $$\|\frac{C-C^T}{2} \|\leq \|C-S\|$$
\\A similar proof can be given to the second point. 
\end{proof}

\begin{remark}
A measures $\mu$ of asymmetry ($\nu$ of radial asymmetry) of a copula $C$ can be  defined naturally as the distance between $C$ and $\mathcal{S}$ ($C$ and $\mathcal{R}$). So  
 $$\mu(C)=\|\frac{C-C^T}{2}\|$$
and respectively 
$$\nu(C)=\|\frac{C-\widehat{C}}{2}\|$$
\end{remark}

In the sequel, we give some obvious but important consequences of this splitting procedure of copulas to (radial) symmetric and (radial) asymmetric parts.

\begin{proposition}
\be
\item Let  $C$ be a copula. We have 
$$\left( C^T\right)_S=C_S,~~\left( \widehat{C}\right)_R=C_R,~~\widehat{C_S}=\left(\widehat{C}\right),~~(C_R)^T=(C^T)_R$$

\item The copulas $C$ and $C_S$ have same  Spearman $\rho$,  same Gini's gamma and  same Blomqvist's beta. But not necessarily  same Kendhal's tau.
\item The copulas $C$ and $C_R$ have same  Spearman $\rho$,  same Gini's gamma and  same Blomqvist's beta. But not necessarily  same Kendhal's tau.
\item If a copulas $C$ have tail dependence parameters, then $C_S$ have same upper and lower tails dependence parameter than $C$. Meaning:
$$\lambda_L(C)=\lambda_L(C_S)~~\text{and}~~\lambda_U(C)=\lambda_U(C_S)$$
\item If a copulas $C$ have tail dependence parameters, then also for the radial symmetrized $C_R$ and we have $$\lambda_L(C_R)=\lambda_U(C_R)=\frac{1}{2}\left( \lambda_L(C)+\lambda_U(C)\right)$$

\ee

\end{proposition}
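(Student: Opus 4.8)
The plan is to reduce everything to two symmetry principles for the concordance function $\Q$ together with the affineness of $\Q$ in each slot, and then to settle the diagonal-based quantities (Blomqvist's beta and the tail indices) by direct evaluation of the diagonal section $t\mapsto D(t,t)$.

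First I would dispatch item (1), which is pure algebra on the two involutions $C\mapsto C^T$ and $C\mapsto\widehat C$. Since $(C^T)^T=C$ one gets $(C^T)_S=\tfrac12(C^T+C)=C_S$, and since $\widehat{\widehat C}=C$ (a one-line check from the defining formula of $\widehat C$) one gets $(\widehat C)_R=C_R$. The remaining two identities rest on the commutation relation $\widehat{C^T}=(\widehat C)^T$, verified by evaluating both sides at $(u,v)$: each equals $u+v-1+C(1-v,1-u)$. Because the survival map is affine on convex combinations, $\widehat{C_S}=\tfrac12(\widehat C+\widehat{C^T})=\tfrac12(\widehat C+(\widehat C)^T)=(\widehat C)_S$ (I read the printed $\widehat{C_S}=(\widehat C)$ as a misprint for $(\widehat C)_S$), and likewise $(C_R)^T=\tfrac12(C^T+(\widehat C)^T)=\tfrac12(C^T+\widehat{C^T})=(C^T)_R$.

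The crux is items (2) and (3), for which I would isolate two lemmas on $\Q(C_1,C_2)=4\int_{\I^2}C_1\,dC_2-1$: transpose invariance $\Q(C_1^T,C_2^T)=\Q(C_1,C_2)$ and reflection invariance $\Q(\widehat{C_1},\widehat{C_2})=\Q(C_1,C_2)$. Each follows from a change of variables in the Stieltjes integral: the swap $(u,v)\mapsto(v,u)$ pushes $dC_2^T$ forward to $dC_2$ while sending $C_1^T$ back to $C_1$, and the reflection $(u,v)\mapsto(1-u,1-v)$ pushes $d\widehat{C_2}$ forward to $dC_2$ while sending $\widehat{C_1}$ back to $C_1$. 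Since $\Pi$, $M$, $W$ are fixed by both maps (one checks $\widehat M=M$, $\widehat W=W$, $\widehat\Pi=\Pi$, and all three are symmetric), these invariances give $\Q(C^T,K)=\Q(C,K)=\Q(\widehat C,K)$ for $K\in\{\Pi,M,W\}$. Combining with the affineness of $\Q$ in its first argument, $\Q(C_S,K)=\tfrac12\Q(C,K)+\tfrac12\Q(C^T,K)=\Q(C,K)$ and likewise $\Q(C_R,K)=\Q(C,K)$; feeding $K=\Pi,M,W$ into the definitions of $\rho$ and $\gamma$ yields the stated equalities. Blomqvist's beta is even simpler: as $(\tfrac12,\tfrac12)$ is a fixed point of both the swap and the reflection, $C^T(\tfrac12,\tfrac12)=\widehat C(\tfrac12,\tfrac12)=C(\tfrac12,\tfrac12)$, so $C_S$ and $C_R$ share the value $C(\tfrac12,\tfrac12)$ and hence $\beta$.

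Finally, the tail statements (4) and (5) come from the diagonal. Because $C^T(t,t)=C(t,t)$ we get $C_S(t,t)=C(t,t)$ for all $t$, so $\lambda_L$ and $\lambda_U$ are literally unchanged, giving (4). For (5) I would substitute $\widehat C(t,t)=2t-1+C(1-t,1-t)$ into $C_R(t,t)=\tfrac12(C(t,t)+\widehat C(t,t))$ and split the limit defining $\lambda_L(C_R)$: the first half produces $\tfrac12\lambda_L(C)$, and after the substitution $s=1-t\to1^-$ the second half rewrites as $1-\tfrac12\frac{1-C(s,s)}{1-s}\to\tfrac12\lambda_U(C)$, whence $\lambda_L(C_R)=\tfrac12(\lambda_L(C)+\lambda_U(C))$. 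That $\lambda_U(C_R)$ has the same value then follows from the general identity $\lambda_U(\widehat D)=\lambda_L(D)$ applied to the radial symmetric $D=C_R$, for which $\widehat{C_R}=C_R$. The only non-positive claims are the two ``not necessarily equal'' clauses for Kendall's $\tau=\Q(C,C)$, which is genuinely quadratic in $C$ and so does not collapse under averaging; here I would merely exhibit an asymmetric Marshall--Olkin copula $C_{\alpha,\beta}$ with $\alpha\neq\beta$ and check $\tau(C)\neq\tau(C_S)$ (and a non radial symmetric example for the $C_R$ case). The main obstacle is the measure-theoretic justification of the two $\Q$-invariances, i.e. making the change of variables for $\int C_1\,dC_2$ rigorous; once those lemmas are in hand everything downstream is bookkeeping.
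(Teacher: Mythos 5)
Your proposal is correct and follows essentially the same route as the paper: the paper likewise reduces $\rho$, $\gamma$ and $\beta$ to the invariance of $\Q(\cdot,K)$ for $K\in\{\Pi,M,W\}$ under transposition and survival reflection together with the diagonal identities $C_S(t,t)=C(t,t)$ and $\widehat{C}(t,t)=2t-1+C(1-t,1-t)$, and it uses the same asymmetric Marshall--Olkin family (specifically $C_{1/2,1/4}$) for the Kendall counterexample. The only step you leave as a promissory note that the paper actually carries out is the explicit evaluation $\int_{\I^2}C^T\,dC=\frac{402}{1323}$, which confirms $\tau(C_S)\neq\tau(C)=\frac{1}{5}$ for that example.
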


\begin{proof}
\begin{enumerate}
 \item  Theses equalities  are  an  immediate consequences of  definitions.

\item
%
%$$\begin{tabular}{ccc}
%$\widehat{C_S}(u,v)$ & $=$ & $u+v-1+C_S(1-u,1-v)$\\
%$ $ & $=$ & $u+v-1+\frac{1}{2}\left(C(1-u,1-v)+C(1-v,1-u)\right)$\\
%$ $ & $=$ & $\frac{1}{2}\left(2u+2v-2+C(1-u,1-v)+C(1-v,1-u)\right)$\\
%$ $ & $=$ & $\frac{1}{2}\left(\widehat{C}(u,v)+{\widehat{C}}^T(v,u)\right)$\\
%$ $ & $=$ & $ \widehat{C}_S(u,v)$
%\end{tabular}$$
%\item
%\item See that $\widehat{(C^T)}=(\widehat{C})^T$
\begin{itemize}

\item[•] 
$$\begin{tabular}{ccc}
$2\int_{\I^2}C_S(u,v)~~dudv$ & $=$ & $\int_{\I^2}(C+C^T)(u,v)~~dudv$\\
$ $ & $=$ & $\int_{\I^2}C(u,v)~~dudv+\int_{\I^2}C^T(u,v)~~dudv$\\
$ $ & $=$ & $2\int_{\I^2}C(u,v)~~dudv$
\end{tabular}$$
And this last calculation leads to $\rho(C_S)=\rho(C)$.
\item[•] By use of symmetry of $M$ and $W$, we have also $\mathcal{Q}(C,M)=\mathcal{Q}(C^T,M)$ and $\mathcal{Q}(C_S,W)=\mathcal{Q}(C^T,W)$ so $\gamma(C_S)=\gamma(C)$
\item[•] From the equality $C_S\left(\frac{1}{2},\frac{1}{2}\right)=C\left(\frac{1}{2},\frac{1}{2}\right)$ we conclude that $\beta(C_S)=\beta(C)$
\item[•] The equality between the Kendal tau of $C$ and $C_S$ is not generally satisfied. Consider the Marshal Olkin copula $C_{\frac{1}{2},\frac{1}{4}}$ for a counter example. 
$$C(u,v)=u^{\frac{1}{2}}v\ind_{u\leq v^2}(u,v)+uv^{\frac{3}{4}}\ind_{u>v^2}(u,v)\;\;~~~~\tau(C)=\frac{1}{5}~~\text{See \cite[165]{NE07} }$$

$$\begin{tabular}{ccc}
$\int_{\I^2}C^T dC$ & $=$ &$\int_{v\leq u^2}C^T dC+\int_{u^2\leq v \leq  \sqrt{u}}C^T dC+\int_{\sqrt{u}\leq v}C^T dC$\\
$ $ & $=$ & $\frac{3}{4}\int_{v\leq u^2}uv^{\frac{1}{4}} ~~du dv+\frac{3}{4}\int_{u^2\leq v \leq  \sqrt{u}}(uv)^{\frac{3}{4}}+\frac{1}{2}\int_{\sqrt{u}\leq v}u^{\frac{1}{4}}v~~du dv$\\
$ $ & $=$ & $\frac{3}{4}\int_{0}^{1}u\left(\int_{0}^{u^2}v^{\frac{1}{4}} dv\right)~~du +\frac{3}{4}\int_{0}^{1}u^{\frac{3}{4}}\left(\int_{u^2}^{\sqrt{u}}v^{\frac{3}{4}} dv\right)~~du+\frac{1}{2}\int_{0}^{1}u^{\frac{1}{4}}\left(\int_{\sqrt{u}}^{1}v dv\right)~~du$\\
$ $ &$=$ &$\frac{402}{1323}$
\end{tabular}$$
Hence
$$\begin{tabular}{ccc}
$\int_{\I^2}C_S(u,v)dC_S$ & $=$ & $\frac{1}{4}\int_{\I^2}\left(C+C^T)(dC+dC^T)\right)$\\
$ $ & $=$ & $\frac{1}{2}\int_{\I^2}C dC+\frac{1}{2}\int_{\I^2}C^T dC$\\
$ $ & $=$ & $\frac{1}{2}\left(\frac{1+\tau_C}{4}\right)+\frac{1}{2}\int_{\I^2}C^T dC\neq \frac{1}{5}$
\end{tabular}$$
%Let us compute $\int_{\I^2}C^T dC$ 

\end{itemize}
\item It suffices to see that $$\mathcal{Q}(\widehat{C},\Pi)=\mathcal{Q}(C,\Pi), ~~\mathcal{Q}(\widehat{C},M)=\mathcal{Q}(C,M)~~\text{and}~~ \mathcal{Q}(\widehat{C},W)=\mathcal{Q}(C,W)$$
\item It suffices to remark that $\forall t\in \I:~~C(t,t)=C_S(t,t)$.
\item An obvious calculus leads to  the result.

\end{enumerate}
\end{proof}
As a natural consequence, all copulas in the convex envelope of $C$, $C_S$ and $C_R$ have the same  Spearman's coefficient $\rho$, the same Blomqvist's Beta and the same Gini's Gamma.

%\begin{proposition}
%For all copula $C$ we have $\|C-C^T\|\leq\frac{1}{3}$  
%\end{proposition}
%\begin{proof}
%For $(u,v)\in \I^2$ this inequality $$|C(u,v)-C^T(u,v)|\leq min\left(u,v,1-u,1-v,|u-v|\right)$$ hold  (see siberg) and (relatioship between asymmetry and concordance) 
%\\we suppose that  $\|C-C^T\|>\frac{1}{3}$ and let $(u_0,v_0)\in \I^2$ a point where the value of  $\|C-C^T\|$ is attained \\Hence $$\frac{1}{3}<|C(u_0,v_0)-C^T(u_0,v_0)|\leq min(u_0,v_0,1-u_0,1-v_0,|u_0-v_0|)$$
%So $\frac{1}{3}<u_0,v_0<\frac{2}{3}$ and then $|u_0-v_0|<\frac{1}{3}$    
%\end{proof}
%The question is if  the value $\frac{1}{3}$ is attained and if is the case, by which  asymmetric copulas? 
%This copulas are the most asymmetric ones. 

A natural question is the \emph{inverse problem} in the sense that
 given a symmetric copula $S$, does it exist a copula $C$ other than $S$ itself such that $C_S=S$? In the following section, we give an answer to this problem for  some remarkable copulas mainly the comprehensive ones: $M$, $W$ and $\Pi$.
 
\section{New copulas and inverse problem}
 
 As mentioned above, the sequel is devoted to the inverse problem as explained at the end of the last section. To do so, we begin by defining a new copula based on perturbation theory:
 
 \begin{theorem}\label{newcopula}
 For all $(u,v)\in \I^2$, we put: $P(u,v)=(u-v)\min(u,v)\min(1-u,1-v)$. Then, the mapping
 $$C(u,v)=\Pi(u,v)+P(u,v)=uv+(u-v)\min(u,v)\min(1-u,1-v)$$
 defines a copula.
\end{theorem}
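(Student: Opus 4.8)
The plan is to check the three conditions of Definition \ref{def1} for $C=\Pi+P$. The border and uniform-margin conditions will be immediate once one notes that the perturbation $P$ vanishes on the whole boundary of $\I^2$: indeed $\min(u,v)=0$ as soon as $u=0$ or $v=0$, while $\min(1-u,1-v)=0$ as soon as $u=1$ or $v=1$, so $P(0,v)=P(u,0)=P(1,v)=P(u,1)=0$. Hence $C$ agrees with $\Pi$ on the boundary and inherits $C(0,v)=C(u,0)=0$, $C(1,v)=v$ and $C(u,1)=u$.

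All the work is in the $2$-increasing property, and the first step is to remove the nested minima by splitting the square along the diagonal $u=v$. For $u\le v$ one has $\min(u,v)=u$ and $\min(1-u,1-v)=1-v$, while for $u\ge v$ one has $\min(u,v)=v$ and $\min(1-u,1-v)=1-u$, which yields the piecewise polynomial form
$$P(u,v)=\begin{cases} u(1-v)(u-v), & u\le v,\\ v(1-u)(u-v), & u\ge v.\end{cases}$$
Both branches vanish on $u=v$, so $P$, and therefore $C$, is continuous on $\I^2$.

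I would then establish that $C$ is $2$-increasing by exhibiting a non-negative density. Differentiating each polynomial branch of $C=uv+P$ twice gives the mixed partial
$$\frac{\partial^2 C}{\partial u\,\partial v}(u,v)=\begin{cases} 2(v-u), & u<v,\\ 2(1-u+v), & u>v,\end{cases}$$
and each expression is non-negative on its region: $v-u\ge 0$ when $u\le v$, and $1-u+v\ge 0$ always because $u\le 1$ and $v\ge 0$. Provided this pointwise density genuinely represents the $C$-volume, the formula $V_C(R)=\int_R \partial^2_{uv}C\,dA\ge 0$ over every rectangle $R$ then delivers the third condition.

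The one delicate point, which I expect to be the main obstacle, is the diagonal $u=v$: a kink in $C$ there could in principle contribute a singular term to the mixed partial and invalidate the density computation. I would dispose of this by checking that $C$ is in fact $C^1$ across the diagonal, i.e. that $\partial C/\partial u$ and $\partial C/\partial v$ take the same one-sided values from $u<v$ and from $u>v$; a direct computation gives $\partial C/\partial u=2u-u^2$ and $\partial C/\partial v=u^2$ on $u=v$ from both sides. Since the first partials match, there is no singular contribution, $C$ equals the integral of the non-negative density displayed above, and the $C$-volume of every rectangle is non-negative, completing the verification that $C$ is a copula.
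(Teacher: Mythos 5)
Your proof is correct, but it takes a genuinely different route from the paper's. The paper verifies the $2$-increasing property by computing the $P$-volume of a rectangle directly in each of the regions $\Delta^-=\{v\le u\}$ and $\Delta^+=\{v\ge u\}$ (obtaining, e.g., $V_C(R)=V_\Pi(R)(2-u_1-u_2+v_1+v_2)\ge 0$ on $\Delta^-$) and appeals to ``geometrical considerations'' to reduce a general rectangle to these cases; it then separately argues that $C$ takes values in $\I$ via the signs of the first partial derivatives. You instead exhibit the a.e.\ density $\partial^2_{uv}C$, equal to $2(v-u)$ on $\{u<v\}$ and $2(1-u+v)$ on $\{u>v\}$, check its non-negativity, and justify $V_C(R)=\int_R\partial^2_{uv}C\,dA$ by verifying that $C$ is $C^1$ across the diagonal (both one-sided first partials agree there, equalling $2u-u^2$ and $u^2$), so that the kink hides no singular contribution; all of your computations check out. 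Your route buys two things: it treats every rectangle at once, including those straddling the diagonal --- a case the paper's reduction handles only implicitly, since such a rectangle cannot be partitioned into finitely many rectangles each lying on one side, so a grid-refinement or limiting argument is really needed there --- and it isolates and disposes of the one genuinely delicate point explicitly. The paper's computation, in exchange, produces the closed form of $V_C(R)$ on each region, which it reuses in the analogous theorem for the perturbation $Q$. One cosmetic remark: you do not verify that $C$ maps into $\I$, but this is automatic from groundedness, the uniform margins and $2$-increasingness (apply $V_C\ge 0$ to $[0,u]\times[0,v]$ and $[0,u]\times[v,1]$), so the omission is harmless.
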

 
 The mapping $C$ is a natural perturbation of independence copula $\Pi$. One may se \cite{Durante13}  for more detail on perturbation theory of copulas. The novelty in this new copula is, for the best of our knowledge, its asymmetry, contrary to the most known perturbed copulas in the literature.\\
 Let us prove that $C$ is actually a copula
 
 \begin{proof}[of Theorem (\ref{newcopula})]
 $P$ satisfies for all $u,v\in \I$ 
$$P(v,u)=-P(u,v), P(u,0)=P(u,1)=P(0,v)=P(1,v)=0$$
Thus the bifunction $C=\Pi+P$ satisfies the  copula boundary conditions.
 \\Let $R=[u_1,u_2]\times[v_1,v_2]$ be a rectangle in $\I^2$.
We denote $$\Delta=\{(x,y)\in \I^2~~y=x\},~~\Delta^-=\{(x,y)\in \I^2~~y\leq x\}~~\text{and}~~\Delta^+=\{(x,y)\in \I^2~~y\geq x\}.$$

 \noindent For geometrical considerations, it suffices to discuss the following cases:
\begin{itemize}
\item[•] If $(v_1,v_2)$ and $(u_1,u_2)$ are both in the mean diagonal $\Delta$, meaning $u_1=v_1$ and $u_2=v_2$ then clearly  $V_P(R)=0$ holds
\item If $R\subset \Delta^-$
$$\begin{tabular}{ccc}
$V_P(R)$ & $=$ & $v_2(1-u_2)(u_2-v_2)+v_1(1-u_1)(u_1-v_1)-v_2(1-u_1)(u_1-v_2)-v_1(1-u_2)(u_2-v_1)$\\
$ $ & $=$ & $(v_2-v_1)(u_2-u_1)(1-u_2-u_1+v_1+v_2)$\\
$ $ & $=$ & $V_{\Pi}(R)(1-u_2-u_1+v_1+v_2)$\\
\end{tabular}$$
Thus $V_{C}(R)=V_{\Pi}(R)(2-u_2-u_1+v_1+v_2)\geq 0$
\item[•]If $R\subset \Delta^+$ then 
$$\begin{tabular}{ccc}
$V_P(R)$ & $=$ & $u_2(1-v_2)(u_2-v_2)+u_1(1-v_1)(u_1-v_1)-u_2(1-v_1)(u_1-v_2)-u_1(1-v_2)(u_2-v_1)$\\

\end{tabular}$$

\end{itemize} 
  It remains to prove that for all $u,v\in \I$ we  have $C(u,v)\in \I$.
% $$C(u,v)= \begin{cases} uv+v(1-u)(u-v)~~\text{if}~~(u,v)\in \Delta^-\\uv+u(1-v)(u-v)~~\text{else} \end{cases}$$
 The partial derivatives of $C$ can be written as
follows: $$(u,v)\in \Delta^-\begin{cases} \frac{\partial C}{\partial u}(u,v)= v(2-2u+v)\\
\\
\frac{\partial C}{\partial v}(u,v)= 2u-2uv+v^2\end{cases}$$
 and 
$$(u,v)\in \Delta^+\begin{cases} \frac{\partial C}{\partial u}(u,v)= 2-2u-2v+2uv+v^2\\\\ \frac{\partial C}{\partial v}(u,v)= 1-2u-2v+2uv+u^2\end{cases}$$
Thus the extremal values of $C$ are taken in the borders of $\Delta^-$ and $\Delta^+$.
\end{proof}

%\begin{center}
%\begin{figure}[!h]
%\centering 
%\includegraphics[height=16cm,width=14cm]{copula1.pdf}
%\caption{Asymmetric perturbation of $\Pi$}
%\end{figure}
%\end{center}
% 
 This new copula will allow to establish easily the third point of the following proposition
\begin{proposition}
\be 
\item $M$ is the unique copula $C$ satisfying $C_S=M$.
\item $W$ is the unique copula $C$ satisfying $C_S=W$.
\item There are many copulas $C$ satisfying $C_S=\Pi$.
\ee 
\end{proposition}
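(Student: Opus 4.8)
The plan is to handle the three assertions by genuinely different mechanisms: the extremal targets $M$ and $W$ will be \emph{forced} to be unique by the Fr\'echet-Hoeffding inequalities, whereas the independence target $\Pi$ will be shown to admit a whole continuum of solutions built from the antisymmetric perturbation $P$ of Theorem \ref{newcopula}. The governing identity throughout is simply $C_S=(C+C^T)/2$, together with the fact that $C^T$ is a copula whenever $C$ is.

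For the first point I would start from the universal upper bound $C\le M$. Since $C^T$ is again a copula we also have $C^T\le M$, and as $M$ is symmetric these say $C(u,v)\le\min(u,v)$ and $C(v,u)\le\min(u,v)$ at every point. If $C_S=M$, then $(C(u,v)+C(v,u))/2=\min(u,v)$; an average of two reals each at most $\min(u,v)$ can equal $\min(u,v)$ only when both already equal it, so $C=M$ pointwise. The second point is the exact mirror image run off the lower bound: from $C\ge W$, $C^T\ge W$, and the symmetry of $W$ one gets $C(u,v),\,C(v,u)\ge\max(u+v-1,0)$, and the hypothesis $C_S=W$ squeezes both to equality, giving $C=W$.

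For the third point I would first record the characterization that $C_S=\Pi$ is equivalent to $C-\Pi$ being antisymmetric: because $\Pi^T=\Pi$, the condition $(C+C^T)/2=\Pi$ rewrites as $(C-\Pi)+(C-\Pi)^T=0$. The copula $C=\Pi+P$ of Theorem \ref{newcopula} has precisely this shape, since $P(v,u)=-P(u,v)$; consequently $C^T=\Pi-P$ is also a copula (a transpose of a copula), and it differs from $C$ because $P\not\equiv 0$ (for instance $P(3/4,1/4)=1/32$). To manufacture infinitely many solutions I would then invoke the convexity of $\mathcal C$: for $t\in\I$ the combination $tC+(1-t)C^T=\Pi+(2t-1)P$ is a copula, and its deviation $(2t-1)P$ from $\Pi$ is antisymmetric, so each such copula symmetrizes back to $\Pi$. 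As $t$ ranges over $\I$ this yields the full segment $\{\Pi+\lambda P:\lambda\in[-1,1]\}$ of pairwise distinct copulas, all with symmetrized part $\Pi$.

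I do not expect any of the three parts to present a real obstacle once Theorem \ref{newcopula} is available: the first two fall out immediately from the Fr\'echet-Hoeffding squeeze, and the third reduces to convexity of $\mathcal C$ combined with the antisymmetry of $P$, which lets me avoid recomputing any $C$-volumes. The only substantive work — producing a genuinely non-symmetric copula whose symmetrization is $\Pi$ — was already discharged in constructing $P$, which is exactly why that theorem is placed just before this proposition.
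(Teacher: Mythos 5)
Your proposal is correct and follows essentially the same route as the paper: points (1) and (2) are the same Fr\'echet--Hoeffding squeeze (the paper phrases it as a non-positive function $C-M$ equalling a non-negative one $M-C^T$, which is your averaging argument in different words), and point (3) rests on the copula $\Pi+P$ from Theorem \ref{newcopula} exactly as the paper does. Your explicit construction of the segment $\{\Pi+\lambda P:\lambda\in[-1,1]\}$ via convexity is a nice concrete elaboration, but the paper makes the same observation in a remark immediately after, so nothing is genuinely new in the method.
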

\begin{proof}
\begin{enumerate}
\item Let $C$ be a copula such  $C_S=M$. Meaning $C-M=M-C_S$. 
$C-M$ is non positive bifunction and equal to a non negative one $M-C_S$ thus $C-M=0$ 
\item Let $C$ be a copula such  $C_S=W$. Meaning $C-W=W-C_S$. 
$C-W$ is non negative bifunction and equal to a non positive  one $W-C_S$ thus $C=W$  
\item It is an immediate consequence of the theorem (\ref*{newcopula}) above.
\end{enumerate}
\end{proof}

\begin{theorem}\label{newcopula}
 For all $(u,v)\in \I^2$, we put: $Q(u,v)=(u+v-1)\min(1-u,v)\min(u,1-v)$. Then, the mapping
 $$C(u,v)=\Pi(u,v)+Q(u,v)=uv+(u+v-1)\min(1-u,v)\min(u,1-v)$$
 defines a copula.
\end{theorem}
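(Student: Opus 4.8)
The plan is to follow the template of the preceding theorem, replacing the main diagonal by the anti-diagonal $u+v=1$, which is the locus where the factor $u+v-1$ in $Q$ changes sign and where the two minima switch their active branch. Set $\nabla^-=\{(u,v)\in\I^2:\ u+v\le 1\}$ and $\nabla^+=\{(u,v)\in\I^2:\ u+v\ge 1\}$. On $\nabla^-$ one has $\min(1-u,v)=v$ and $\min(u,1-v)=u$, so $Q(u,v)=(u+v-1)uv$ and hence $C(u,v)=uv(u+v)$; on $\nabla^+$ one has $\min(1-u,v)=1-u$ and $\min(u,1-v)=1-v$, so $Q(u,v)=(u+v-1)(1-u)(1-v)$ and $C(u,v)=uv+(u+v-1)(1-u)(1-v)$. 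I would record these two polynomial expressions first, since everything else is read off from them.

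First I would check the boundary behaviour. A direct substitution gives $Q(u,0)=Q(0,v)=Q(u,1)=Q(1,v)=0$, so $Q$ vanishes on $\partial\I^2$ and $C=\Pi+Q$ inherits the border conditions and the uniform margins of $\Pi$. I would also note the symmetry $Q(u,v)=Q(v,u)$ (the pair of minima is simply interchanged), so that, in contrast with the asymmetric perturbation $\Pi+P$, the copula $C=\Pi+Q$ is symmetric; a short computation moreover yields $\widehat{C}=\Pi-Q$, whence $C_R=\Pi$, which is precisely what makes $C$ useful for the radial inverse problem.

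The heart of the argument is the $C$-volume condition. Rather than splitting rectangles into cases, I would compute the density off the anti-diagonal: on $\nabla^-$ one finds $\partial^2_{uv}C=2(u+v)\ge 0$, and on $\nabla^+$ one finds $\partial^2_{uv}C=2(u+v-1)\ge 0$, each non-negative on its own region. To turn this into $V_C(R)\ge 0$ for every rectangle, including those straddling the line $u+v=1$, I would verify that $C$ is of class $C^1$ across the anti-diagonal: both one-sided expressions give $\partial_u C=1-u^2$ along $v=1-u$ (and symmetrically for $\partial_v C$), so no singular mass is concentrated on $u+v=1$ and $V_C(R)=\iint_R \partial^2_{uv}C\,du\,dv\ge 0$ for all $R$. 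Finally, that $C$ maps into $\I$ follows from the standard Fr\'echet--Hoeffding argument: a grounded, $2$-increasing function with uniform margins satisfies $W\le C\le M$, and hence $0\le C\le 1$.

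I expect the only delicate point to be the straddling rectangles, and the $C^1$-matching across $u+v=1$ is exactly the step that disposes of it; everything else is routine polynomial bookkeeping. It is worth recording a shortcut that bypasses these computations entirely: a direct check gives $Q(u,v)=-P(1-u,v)$, whence $C(u,v)=v-(\Pi+P)(1-u,v)$ is the horizontal reflection $(u,v)\mapsto v-C_P(1-u,v)$ of the copula $C_P=\Pi+P$ of the preceding theorem. Since reflecting a copula in a single argument again produces a copula, this proves the statement at once; I would nevertheless keep the direct verification as the primary proof, to match the self-contained style of the first theorem.
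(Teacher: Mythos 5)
Your proof is correct, but the decisive step is argued by a genuinely different route than the paper's. For $2$-increasingness the paper computes $V_Q(R)$ explicitly for rectangles contained in $\Omega^-=\{u+v\le 1\}$ or in $\Omega^+=\{u+v\ge 1\}$, and for rectangles with two opposite corners on the anti-diagonal, obtaining $V_C(R)=V_\Pi(R)\left(u_1+u_2+v_1+v_2\right)$ and $V_C(R)=V_\Pi(R)\left(u_1+u_2+v_1+v_2-2\right)$ respectively, and invokes ``geometrical considerations'' to reduce the general case to these; its partial derivatives are used only to argue that $C$ takes values in $\I$. You instead compute the density, $\partial^2_{uv}C=2(u+v)$ on $\{u+v\le1\}$ and $\partial^2_{uv}C=2(u+v-1)$ on $\{u+v\ge1\}$, and verify the $C^1$-matching $\partial_u C=1-u^2$ along $u+v=1$, so that $V_C(R)=\iint_R\partial^2_{uv}C\,du\,dv\ge 0$ for \emph{every} rectangle, including those straddling the anti-diagonal. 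This handles rigorously exactly the reduction the paper leaves implicit, and your appeal to the Fr\'echet--Hoeffding bounds for $0\le C\le 1$ is legitimate and shorter than the paper's derivative check. Your integral formulas are consistent with the paper's: integrating your densities over $R$ recovers the paper's two expressions for $V_C(R)$. The shortcut $Q(u,v)=-P(1-u,v)$, hence $C(u,v)=v-(\Pi+P)(1-u,v)$, is also correct and proves the theorem at once from the preceding one via the standard flip transformation; the paper does not exploit this link, and it is arguably the cleanest argument available. Your side observations that $C=\Pi+Q$ is symmetric, that $\widehat{C}=\Pi-Q$, and that $C_R=\Pi$ are correct and anticipate the paper's closing remark. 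In short: both proofs are valid; the paper's is more explicit about the volume formulas on each region, while yours is tighter on the straddling rectangles and offers an independent one-line reduction to the previous theorem.
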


\begin{proof}
$Q$ satisfies for all $u,v\in \I$ 
$$Q(1-u,1-v)=-Q(u,v), Q(u,0)=Q(u,1)=Q(0,v)=Q(1,v)=0$$ We denote  $$\Omega=\{(x,y)\in \I^2~~y=1-x\},~~,\Omega^-=\{(x,y)\in \I^2~~y\leq 1-x\},~\text{and}~\Omega^+=\{(x,y)\in \I^2~~y\geq1-x\}$$

We have to show that the independence perturbed copula $C=\Pi+Q$ is a copula.
\\The bifunction $C$ satisfies the border conditions.
 \\Let $R=[u_1,u_2]\times[v_1,v_2]$ be a rectangle in $\I^2$.  As above, the geometrical considerations lead to discuss only the following cases:
\begin{itemize}
\item[•] If $(u_1,v_2)$ and $(u_2,v_1)$ are in $\Omega$, then $$V_Q(R)=Q(u_1,v_1)+Q(u_2,v_2)-Q(u_1,1-u_1)-Q(u_2,1-u_2)=0$$

\item If $R\subset \Omega^-$
$$\begin{tabular}{ccc}
$V_Q(R)$ & $=$ & $v_2u_2(u_2+v_2-1)+u_1v_1(u_1+v_1-1)-u_1v_2(u_1+v_2-1)-u_2v_1(u_2+v_1-1)$\\
$ $ & $=$ & $V_{\Pi}(R)\left(u_1+u_2+v_1+v_2-1\right)$
\end{tabular}$$
Thus $V_{C}(R)=V_{\Pi}(R)(u_1+u_2+v_1+v_2)\geq 0$
\item[•]If $R\subset \Omega^+$. Let $R'$ be the rectangle $[1-u_2,1-u_1]\times[1-v_2,1-v_1]$ then 
$$\begin{tabular}{ccc}
$V_Q(R)$ & $=$ & $-V_Q(R')$\\
$ $ & $=$ & $-V_{\Pi}(R')\left(1-u_1+1-u_2+1-v_1+1-v_2-1\right)$\\
$ $ & $=$ & $-V_{\Pi}(R)\left(3-u_1-u_2-v_1-v_2\right)$\\
\end{tabular}$$
Thus $V_C(R)=V_{\Pi}(R)(u_1+u_2+v_1+v_2-2)\geq 0$ because $u_1+v_1\geq 1$ and $u_2+v_2\geq 1$

\end{itemize} 
 It remains to prove that for all $u,v\in \I$ we  have $C(u,v)\in \I$.
% $$C(u,v)= \begin{cases} uv(u+v)~~\text{if}~~(u,v)\in \Omega^-\\uv+(1-u)(1-v)(u+v-1)~~\text{else} \end{cases}$$
The partial derivatives of $C$ are given by
$$(u,v)\in \Omega^-\begin{cases} \frac{\partial C}{\partial u}(u,v)= v(2u+v)\\\\\frac{\partial C}{\partial v}(u,v)= u(u+2v)\end{cases}$$

and

$$(u,v)\in\Omega^+\begin{cases} \frac{\partial C}{\partial u}(u,v)= 2-2u-2v+2uv+v^2 \\\\ \frac{\partial C}{\partial v}(u,v)= 2-2u-2v+2uv+u^2\end{cases}$$
Thus the extremal values of $C$ are taken on the borders of $\Omega^-$ and $\Omega^+$.

\end{proof}
%
% \begin{center}
%\begin{figure}[!h]
%\centering 
%\includegraphics[height=14cm,width=12cm]{copula2.pdf}
%\caption{Radial asymmetric perturbation of $\Pi$}
%\end{figure}
%\end{center}

As just done for a symmetric copula, given a radial symmetric copula $R$. Does it exist a copula $C$ other than $R$ itself such that $C_R=R$? We try to answer for the same tree copulas as done above.
\begin{proposition}
\be 
\item $M$ is the only copula $C$ satisfying $C_R=M$.
\item $W$ is the only copula $C$ satisfying $C_R=W$.
\item There are many copulas $C$ satisfying $C_R=\Pi$.
\ee 
\end{proposition}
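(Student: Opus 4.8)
The plan is to mirror the symmetrized case treated earlier, replacing transposition by the survival transform and the projection $C_S=\frac{C+C^T}{2}$ by the radial projection $C_R=\frac{C+\widehat{C}}{2}$ of Proposition~\ref{Projection}. The key auxiliary fact I would record first is that $\widehat{C}$ is itself a copula whenever $C$ is, so the Fr\'echet-Hoeffding bounds $W\le \widehat{C}\le M$ are available. I would also note (as already observed in the Example) that $M$ and $W$ are radial symmetric, so $M_R=M$ and $W_R=W$, which is what makes the uniqueness statements of (1) and (2) meaningful.

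For (1), I would suppose $C_R=M$ and rewrite this as $C-M=M-\widehat{C}$. Since $C\le M$ the left-hand side is nonpositive, while since $\widehat{C}\le M$ the right-hand side is nonnegative; hence both vanish and $C=M$. For (2) I would argue symmetrically: $C_R=W$ reads $C-W=W-\widehat{C}$, where now $C\ge W$ forces the left side nonnegative and $\widehat{C}\ge W$ forces the right side nonpositive, so $C=W$. These two parts are essentially immediate once the bounds on $\widehat{C}$ are in place; no real obstacle arises.

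For (3), I would exhibit the copula $C=\Pi+Q$ furnished by the second new-copula theorem above, and compute its survival transform. Using the antisymmetry $Q(1-u,1-v)=-Q(u,v)$ together with $\Pi(1-u,1-v)=(1-u)(1-v)=1-u-v+uv$, a short calculation gives
$$\widehat{C}(u,v)=u+v-1+\Pi(1-u,1-v)+Q(1-u,1-v)=\Pi(u,v)-Q(u,v).$$
Consequently $C_R=\frac{(\Pi+Q)+(\Pi-Q)}{2}=\Pi$, while $C\ne\Pi$ because $Q\not\equiv 0$. To produce a genuine infinite family rather than a single example, I would then invoke convexity of $\mathcal C$: for each $t\in[0,1]$ the combination $C_t=(1-t)\Pi+t(\Pi+Q)=\Pi+tQ$ is a copula, and the same computation yields $\widehat{C_t}=\Pi-tQ$, whence $(C_t)_R=\Pi$ for every $t$. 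Since $t\mapsto\Pi+tQ$ is injective, this gives a one-parameter family of distinct copulas all radially symmetrizing to $\Pi$.

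The only genuine computation is the survival identity in (3), and the mild obstacle there is verifying that the antisymmetry $Q(1-u,1-v)=-Q(u,v)$ is precisely the property that cancels the $Q$-contribution after the $u+v-1$ shift, leaving exactly $\Pi-Q$; everything else in the argument is forced by applying the Fr\'echet-Hoeffding bounds to both $C$ and $\widehat{C}$.
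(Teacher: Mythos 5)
Your proposal is correct and follows essentially the same route as the paper: parts (1) and (2) use exactly the paper's argument that $C-M=M-\widehat{C}$ (resp.\ $C-W=W-\widehat{C}$) forces both sides to vanish by the Fr\'echet--Hoeffding bounds applied to $C$ and $\widehat{C}$, and part (3) uses the perturbed copula $\Pi+Q$ and the family $\Pi+tQ$, which is precisely the paper's construction (stated there in the remark for $\theta\in[-1,1]$). Your explicit computation that $\widehat{\Pi+Q}=\Pi-Q$ via the antisymmetry $Q(1-u,1-v)=-Q(u,v)$ is a detail the paper leaves implicit, and it is correct.
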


\begin{proof}
\begin{enumerate}
\item Let $C$ be a copula such  $C_R=M$. Meaning $C-M=M- \widehat{C}$. 
$C-M$ is non positive bifunction and equal to a non negative one $M-\widehat{C}$ thus $C=M$ 
\item Let $C$ be a copula such  $C_R=W$. Meaning $C-W=W-\widehat{C}$. 
$C-W$ is non negative bifunction and equal to a non positive  one $W-\widehat{C}$ thus $C=W$  
\ee
\end{proof}
\begin{remark}
For any given $\theta \in [-1,1]$,
\begin{itemize}
\item[•]The bifunction $~~\Pi+\theta P$ is a copula whose the symmetrized is $\Pi$.
\item[•] The bifunction $~~\Pi+\theta Q$ is a copula whose the radial symmetrized is $\Pi$.
\end{itemize}  

\end{remark}
\section{Conclusion}

The notion of symmetrized and radial symmetrized  copula will be probably a key to study the phenomena which are not symmetric by approximating with a symmetric distribution. We have cited some  properties preserved by the symmetrization procedure and we hope to investigate statistically the loss of such operation and characterize copulas for which the symmetrized one is archimedean.
%\newpage

\section*{Acknowledgment:}
\noindent The authors are grateful to anonymous referees for their prior remarks and effort.

%\end{document}
\end{document}